\journal{Operations Research Letters}
\newtheorem{lemma}{Lemma}
\newtheorem{theorem}{Theorem}
\newtheorem{myrule}{Rule}
\newtheorem{proposition}{Proposition}
\begin{document}

\begin{frontmatter}

\title{A combinatorial algorithm for constrained assortment optimization under nested logit model}

\author[SHUFESIME]{Tian~Xie}\ead{xietiansh@gmail.com}
\address{Research Center for Management Science and Information Analytics, School of Information Management and Engineering, Shanghai University of Finance and Economics, China}

\begin{abstract}
We consider the assortment optimization problem with disjoint-cardinality constraints under two-level nested logit model.
To solve this problem, we first identify a candidate set with $O(mn^2)$ assortments and show that at least one optimal assortment is included in this set.
Based on this observation, a fast algorithm, which runs in $O(m n^2 \log mn)$ time, is proposed to find an optimal assortment. 
%
\end{abstract}

\begin{keyword}
Revenue management \sep Assortment optimization \sep Nested logit model \sep Combinatorial algorithm
\end{keyword}

\end{frontmatter}

\section{Introduction}

Assortment optimization is an important topic in revenue management. 
Briefly speaking, it considers the situation that a decision maker wants to determine a set of offered products so as to maximize the expected revenue. 
In this case, each product is assigned with an exogenous fixed price and a consumer chooses at most one item from the available products 
according to some preference, which is specified by certain choice model. 
Of course, the optimal set is usually not simply the entire product set, and the optimal structure is highly dependent on the underlying choice model. 
The interested readers are referred to the excellent survey \cite{kok2015} for the related literature and applications. 

In practice, we often encounter the case that the number of offered products cannot exceed certain threshold although it is indeed beneficial to offer a larger assortment, 
due to the limited resource. For example, shelf space in a shop may be limited, or the number of advertisement displayed on a webpage is subject to the size of the screen. 
Thus, it is reasonable to consider the problem with a constraint on the total size of the offer set. 
If all the products are assumed to be the same size, that is only the number of products matters. 
Then, the constraints imposed are also referred to as cardinality constraints. 

The most popular choice model is the so-called multinomial logit (MNL) model (see \cite{mcfadden1973}). 
Talluri and van Ryzin \cite{talluri2004} first introduced this model to revenue management. 
They considered the associated unconstrained assortment optimization and showed that the optimal solution is among the assortments with \textit{revenue-ordered} structure. 
\cite{rusmevichientong2010} considered the MNL problem with cardinality constraints and 
proposed an strongly polynomial algorithm to generate an $O(nC)$-sized candidate assortment set which contains at least one optimal assortment, 
where there are $n$ products and the shop can offer no more than $C$ products. 
Although MNL model demonstrates clear advantages by providing tractability in many assortment optimization problems, it is criticized a lot due to the independence of irrelevant alternatives (IIA) property (see \cite{benakiva1985} for more information). 

The two-level nested logit model (see \cite{williams1977,mcfadden1978}) is an extension of MNL model and partially alleviates the drawbacks of IIA. 
This model divides products into several nests and describes a consumer's behavior as firstly choosing a nest and then picking a product in the chosen nest. 
\cite{davis2014} showed by imposing some additional conditions, the assortment optimization under two-level nested logit model is polynomial time solvable. 
Note that the products in the same nest must have some similarity. 
If all the products in the same nest are assumed to be the same size, 
the decision maker can plan how many products to offer for each nest before determining the optimal assortment. 
The constraints are referred as \textit{disjoint-cardinality constraints}. 
Furthermore, if we assume all the products are of the same size, the decision maker only need to plan how many products to offer in total. 
The constraints are referred as \textit{joint-cardinality constraints}. 
The linear program formulation for the unconstrained, disjoint-cardinality constrained, and joint-cardinality constrained assortment optimization problems were provided in
\cite{davis2014,gallego2014,feldman2015} respectively. By exploring more properties of the unconstrained problem,
a greedy strongly polynomial time algorithm was given in \cite{li2014}. 

In this paper, we consider the assortment optimization problem with disjoint-cardinality constraints under two-level nested logit model.
The previous work \cite{gallego2014} formulated the problem into a linear program with $(m + 1)$ decision variables and $O(m n^2)$ constraints,
However, it is well known that whether the linear programming is strongly polynomial time solvable is still open.
The major contribution of this paper is that we construct a candidate assortments set which has $O(mn^2)$ elements and contains at least one optimal assortment,
where $m$ is the number of nests and $n$ is the number of products in each nest.
Based on this, we design a combinatorial algorithm to find an optimal assortment which runs in $O(m n^2 \log mn)$ time.
To the best of our knowledge, our algorithm demonstrates the least computational complexity comparing to other known algorithms.
Our computational experiments also suggest that our approach is much more efficient than directly solving the linear program in \cite{gallego2014} by CPLEX.

We describe the problem and the linear programming formulation in Section \ref{SECT-REVIEW}. 
We derive optimality conditions and identify a candidate set with $O(mn^2)$ assortments containing at least one optimal assortment in Section \ref{SECT-OPT-CON}, 
followed by the algorithm to find an optimal assortment in Section \ref{SECT-ALG}. 
We present some computational results in Section \ref{SECT-COMPUT} to prove the efficiency of our method. 

\section{Review of the problem}\label{SECT-REVIEW}

\subsection{Problem description}


We start off by briefly describing the mathematical formulation in \cite{davis2014} of the two-level nested logit model.

Suppose there are $m$ nests indexed by $\{1, 2, \ldots, m\}$. In each nest, there are $n$ products and denote
the $j$-th product in the $i$-th nest as $ij$.
(Note that there are no overlapping between products in different nests.)
A consumer firstly chooses some nest $i$ with probability $\mathbb{P}(\mathcal{I} = i)$,
and then picks product $ij$ with probability $\mathbb{P}(\mathcal{J} = j \vert \mathcal{I} = i)$.
There is also a no-purchase option indexed by 0 which represents the case that the consumer leaves without making any purchase
and the probability is denoted as $\mathbb{P}(\mathcal{I} = 0)$.
It is assumed that the no-purchase option is isolated from the nests, i.e., if a consumer has chosen nest $i$, she will definitely buy a product.
We assign each product $ij$ with weight $v_{ij} \geq 0$ and revenue $r_{ij} \geq 0$, while the no-purchase option has weight $v_0 > 0$ and revenue 0.
In addition, each nest $i$ has a dissimilarity parameter $\gamma_i \in (0, 1]$.
Without loss of generality, we assume every nest includes exact $n$ products,
otherwise we simply add dummy products with weight 0.

An assortment is a bundle of sets $S = (S_1, S_2, \ldots, S_m)$ where $S_i \subseteq \{i1, i2, \ldots, in\}$ is the set of products that offered in nest $i$.
Therefore, the final offer set is $\cup_{i = 1}^m S_i$. Given the assortment $(S_1, S_2, \ldots, S_m)$, the customer chooses nest $i$ with probability
\[\mathbb{P}(\mathcal{I} = i) = \frac{V_i(S_i)^{\gamma_i}}{v_0 + \sum_{k = 1}^m V_k(S_k)^{\gamma_k}}, \qquad \mathbb{P}(\mathcal{I} = 0) = \frac{v_0}{v_0 + \sum_{k = 1}^m V_k(S_k)^{\gamma_k}}, \]
where $V_i(S_i) = \sum_{j \in S_i} v_{ij}$ is the total weight of products offered in nest $i$;
the customer picks product $ij$ conditioning on choosing nest $i$ with probability
\[\mathbb{P}(\mathcal{J} = j \vert \mathcal{I} = i) = \frac{v_{ij}}{V_i(S_i)}.\]
Therefore, the expected revenue is given by
\[\Pi(S_1, \ldots, S_m) = \sum_{i = 1}^m \sum_{j = 1}^n r_{ij} \mathbb{P}(\mathcal{J} = j \vert \mathcal{I} = i) \mathbb{P}(\mathcal{I} = i) = \frac{\sum_{i = 1}^m V_i(S_i)^{\gamma_i} R_i(S_i)}{v_0 + \sum_{i = 1}^m V_i(S_i)^{\gamma_i}}\]
where $R_i(S_i) = \frac{\sum_{j \in S_i} r_{ij} v_{ij}}{V_i(S_i)}$ is the weighted average revenue of products offered in nest $i$.

The goal of assortment optimization is to maximize the expected revenue. In this paper, we consider the case that $S$ has disjoint-cardinality constraints.
That is there is a size limitation $C_i$ for the offer set provided in nest $i$. Mathematically, the corresponding assortment optimization can be formulated as
\begin{equation}\label{EQN-DC}
\max_{(S_1, S_2, \ldots, S_m): \forall i, \vert S_i \vert \leq C_i} \frac{\sum_{i = 1}^m V_i(S_i)^{\gamma_i} R_i(S_i)}{v_0 + \sum_{i = 1}^m V_i(S_i)^{\gamma_i}}.
\end{equation}

\subsection{The linear programming formulation}

\cite{gallego2014} showed that the problem~\eqref{EQN-DC} can be written as a linear program. 
It is clear that for every assortment $S = (S_1, S_2, \ldots, S_m)$ and the corresponding expected revenue $Z(S)$, we have
\begin{equation}\label{EQN-EXPREV}
Z(S) = \Pi(S_1, \ldots, S_m) = \frac{\sum_{i = 1}^m V_i(S_i)^{\gamma_i} R_i(S_i)}{v_0 + \sum_{i = 1}^m V_i(S_i)^{\gamma_i}}~\Leftrightarrow~ v_0 Z = \sum_{i = 1}^m V_i(S_i)^{\gamma_i} (R_i(S_i) - Z).
\end{equation}
Therefore, for any $z > \Pi(S_1, \ldots, S_m)$, we have $v_0 z > \sum_{i = 1}^m V_i(S_i)^{\gamma_i} (R_i(S_i) - z)$;
for any $z < \Pi_i(S_1, \ldots, S_m)$, $v_0 z < \sum_{i = 1}^m V_i(S_i)^{\gamma_i} (R_i(S_i) - z)$.
Consequently, given $(S_1, S_2, \ldots, S_m)$, one has $Z(S) = \min \{ z:~ v_0 z \geq \sum_{i = 1}^m V_i(S_i)^{\gamma_i} (R_i(S_i) - z) \}$.

Denote $(S^*_1, S^*_2, \ldots, S^*_m)$ as the optimal assortment and $Z^*$ as the associated expected revenue.
Based on above discussion,
\begin{subequations}
\begin{align}
Z^* &= \max_{(S_1, S_2, \ldots, S_m): \forall i, \vert S_i \vert \leq C_i} \min \left\{ z:~ v_0 z \geq \sum_{i = 1}^m V_i(S_i)^{\gamma_i} (R_i(S_i) - z)\right\}\nonumber\\
 &= \min \left\{ z:~ v_0 z \geq \sum_{i = 1}^m V_i(S_i)^{\gamma_i} (R_i(S_i) - z) ~\forall \vert S_i \vert \leq C_i, \forall i \right\}\nonumber\\
\label{EQN-MAX} &= \min \left\{ z:~ v_0 z \geq \sum_{i = 1}^m \max_{S_i: \vert S_i \vert \leq C_i} V_i(S_i)^{\gamma_i} (R_i(S_i) - z) \right\} \\
 \label{EQN-EQVLP} &= \min \left\{ z:~ v_0 z \geq \sum_{i = 1}^m y_i, y_i \geq V_i(S_i)^{\gamma_i} (R_i(S_i) - z) ~\forall \vert S_i \vert \leq C_i, \forall i \right\}
\end{align}
\end{subequations}
Therefore, we arrive at a linear program formulation \eqref{EQN-EQVLP} with $(m + 1)$ decision variables and exponential numbers of constraints.
\cite{gallego2014} have reduced the size of constraints to polynomial, which will be discussed in Section \ref{SUBSECT-INDNEST}. 

\section{The Optimality Conditions}\label{SECT-OPT-CON}

Section \ref{SUBSECT-INDNEST} firstly describe the idea of \cite{gallego2014} that reduces exponential constraints in \eqref{EQN-EQVLP} to $O(mn^2)$ constraints, 
which is referred as \textit{optimality conditions for individual nests} in this paper. 
Based on this idea, we extend the condition to the entire system by combining all optimality conditions for individual nests. 
We construct a set of $O(mn^2)$ assortments and claim that this set contains at least one optimal assortment.
Moreover, we establish the connection between this set and a piecewise-linear function, where the root of piecewise-linear function is the maximal expected revenue.

\subsection{The optimality condition for an individual nest}\label{SUBSECT-INDNEST}

Recall that $(S^*_1, \ldots, S^*_n)$ as the optimal assortment and $Z^*$ is the maximum expected revenue. Denote $u^*_i = \max\{Z^*, \gamma_i Z^* + (1 - \gamma_i) R_i(S^*_i)\}$.
\cite{gallego2014} showed that when $\gamma_i \in (0, 1]$,
\[V_i(\hat{S}_i)(R_i(\hat{S}_i) - u^*_i) \geq V_i(S_i)(R_i(S_i) - u^*_i), \forall \vert S_i \vert \leq C_i~\Rightarrow~V_i(\hat{S}_i)^{\gamma_i} (R_i(\hat{S}_i) - Z^*) \geq V_i(S_i)^{\gamma_i} (R_i(S_i) - Z^*), \forall \vert S_i \vert \leq C_i. \]
For nest $i$, according to \eqref{EQN-MAX}, it surfies to focus on
$\max_{S_i: \vert S_i \vert \leq C_i} V_i(S_i)(R_i(S_i) - u^*_i)$.
Although the exact values of $u^*_i$ are unknown, we can still solve the following problem
\begin{equation}\label{EQN-SUBPROBLEM}
 \max_{S_i: \vert S_i \vert \leq C_i} V_i(S_i)(R_i(S_i) - u)
\end{equation}
with all $u \in \mathbf{R}$ and find the corresponding optimal solution $\hat{S}_i(u)$ (if multiple solutions exist, just pick one for each $u$ is sufficient).
The advantage of doing this is that if we set
\begin{equation}\label{EQN-CAND}
\mathcal{T}_i := \{\hat{S}_i(u), -\infty < u < \infty\},
\end{equation}
then $\hat{S}_i(u^*) \in \mathcal{T}_i$. Therefore
\begin{equation}\label{EQN-REDUCESIZE}
\max_{S_i: \vert S_i \vert \leq C_i} V_i(S_i)^{\gamma_i}(R_i(S_i) - Z^*) = \max_{S_i \in \mathcal{T}_i} V_i(S_i)^{\gamma_i}(R_i(S_i) - Z^*).
\end{equation}
We can find at least one optimal assortment $S = (S_1, \ldots, S_m)$ such that $S_i \in \mathcal{T}_i,\; \forall\; i$ and achieve the maximal expected revenue.
Thus, we name $\mathcal{T}_i$ as the \textit{candidate set} for nest $i$.


Denote $x_{ij}$ as the dummy indicator of whether ${ij} \in S_i$.
Given $u$, the problem \eqref{EQN-SUBPROBLEM} for nest $i$ can be rephrased as
\[\max \left\{\sum_{j = 1}^n v_{ij} (r_{ij} - u) x_{ij}:~ \sum_{j = 1}^n x_{ij} \leq C_i, x_{ij} \in \{0, 1\}\right\}\]
Then the optimal solution of the above problem can be specified as: $x_{ij}(u) = 1$ iff $v_{ij} (r_{ij} - u) \geq 0$
and $v_{ij} (r_{ij} - u)$ is in the largest $C_i$ elements among $\{v_{ij} (r_{ij} - u)\}_{j = 1}^n$; $x_{ij}(u) = 0$ otherwise.
Note that if ties exist, we can rank them arbitrarily.
Then a candidate for nest $i$ can be constructed as: $\hat{S}_i(u) = \{ij: x_{ij}(u) = 1\}$.

Next we discuss the size of $\mathcal{T}_i$.
Consider the following $n + 1$ lines: $f_0(u) = 0$, $f_j(u) = v_{ij} (r_{ij} - u)$.
To construct $\hat{S}_i(u)$, we can simply choose $C_i$ lines with highest $f_j(u)$, and drop all lines below $f_0(u)$.
There are $q \leq \frac{1}{2} n(n + 1)$ crosspoints for those $(n + 1)$ lines,
and the corresponding horizontal coordinate is denoted as $I_1 \leq I_2 \leq \ldots \leq I_q$.
For any consecutive $I_k < I_{k + 1}$ with different horizontal coordinate,
the relative order of all lines don't change between $u \in (I_k, I_{k + 1})$
because for any $j', j''$, the relative order $f_{j'}(u)$ and $f_{j''}(u)$ don't change between $u \in (I_k, I_{k + 1})$.
There could be ties on some $u = I_k$, and either $u = I_k - \varepsilon$ or $u = I_k + \varepsilon$ ($\varepsilon \rightarrow 0$) can be the alternative tie-breaking rule.
Therefore, a set $\mathcal{T}_i$ with no more than $O(n^2)$ candidates is sufficient for any nest $i$. Note that now \eqref{EQN-EQVLP} can reduced to 
\begin{equation}\label{EQN-REDUCEDLP}
Z^* = \min \left\{ z \middle\vert v_0 z \geq \sum_{i = 1}^m y_i, y_i \geq V_i(S_i)^{\gamma_i} (R_i(S_i) - z) ~\forall S_i \in \mathcal{T}_i, \forall i \right\}
\end{equation}
which is a linear program with $(m + 1)$ decision variables and $O(mn^2)$ constraints. We refer interested readers to \cite{gallego2014} for more details.

\subsection{The optimality condition of the whole problem}\label{SUBSECT-ENTIRE}

To determine the exact optimal solution, the characterization of all the optimal assortments is stated in Proposition \ref{PROP-OPT-ASSORT}.
\begin{proposition}[Optimal Assortments]\label{PROP-OPT-ASSORT}
Denote $Z^*$ as the maximal expected revenue. An assortment $S = (S_1, \ldots, S_m)$ is an optimal assortment if and only if 
\[S_i \in \arg\max_{S_i: \vert S_i \vert \leq C_i}\{V_i(S_i)^{\gamma_i}(R_i(S_i) - Z^*)\}, \forall i.\]
\end{proposition}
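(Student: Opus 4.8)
The plan is to reduce the statement to two monotonicity facts already present in the excerpt and then to close with a termwise-equality argument. Throughout I would abbreviate $g_i(S_i, z) := V_i(S_i)^{\gamma_i}(R_i(S_i) - z)$ and $G_i(z) := \max_{S_i: \vert S_i \vert \leq C_i} g_i(S_i, z)$. The first fact I would invoke is the discussion following \eqref{EQN-EXPREV}: for any fixed assortment $S$, the map $z \mapsto v_0 z - \sum_{i=1}^m g_i(S_i, z)$ is strictly increasing and vanishes exactly at $z = Z(S)$; equivalently, $v_0 z = \sum_{i=1}^m g_i(S_i, z)$ holds if and only if $z = Z(S)$. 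The second fact I would read off \eqref{EQN-MAX}: because each $g_i(\cdot, z)$ is affine in $z$, $G_i$ is a finite maximum of affine functions, hence continuous and non-increasing, so $\Phi(z) := v_0 z - \sum_{i=1}^m G_i(z)$ is continuous and strictly increasing and runs from $-\infty$ to $+\infty$. Its unique root is therefore $Z^* = \min\{z: \Phi(z) \geq 0\}$, which supplies the tightness identity $v_0 Z^* = \sum_{i=1}^m G_i(Z^*)$.

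To prove sufficiency I would assume $S_i \in \arg\max_{S_i: \vert S_i \vert \leq C_i} g_i(S_i, Z^*)$ for every $i$, so that $g_i(S_i, Z^*) = G_i(Z^*)$. Summing and using the tightness identity gives $\sum_{i=1}^m g_i(S_i, Z^*) = \sum_{i=1}^m G_i(Z^*) = v_0 Z^*$; by the first fact $z = Z^*$ is then the root of $z \mapsto v_0 z - \sum_i g_i(S_i, z)$, i.e. $Z(S) = Z^*$, so $S$ is optimal. To prove necessity I would start from an optimal $S$, where $Z(S) = Z^*$, and evaluate the identity \eqref{EQN-EXPREV} at $z = Z^*$ to obtain $v_0 Z^* = \sum_{i=1}^m g_i(S_i, Z^*)$. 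Matching this against the tightness identity yields $\sum_{i=1}^m g_i(S_i, Z^*) = \sum_{i=1}^m G_i(Z^*)$, and since $g_i(S_i, Z^*) \leq G_i(Z^*)$ holds in each coordinate by definition of $G_i$, equality of the sums forces equality in every coordinate, which is exactly $S_i \in \arg\max_{S_i: \vert S_i \vert \leq C_i} g_i(S_i, Z^*)$.

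The step I expect to be the main obstacle is justifying the tightness identity $v_0 Z^* = \sum_{i=1}^m G_i(Z^*)$, that is, showing the inequality defining $Z^*$ in \eqref{EQN-MAX} is met with equality at the optimum rather than leaving a positive gap. This hinges on $\Phi$ being not merely strictly increasing but also continuous; continuity follows because each $G_i$, as a pointwise maximum of finitely many affine functions of $z$, is piecewise linear and continuous, but this point should be made explicit rather than assumed. Once monotonicity and continuity of $\Phi$ are secured, the remaining delicate step is the termwise-equality observation in the necessity direction, which is merely the remark that a sum of quantities each bounded above by a maximum can equal the sum of those maxima only when every bound is attained.
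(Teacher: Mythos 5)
Your proof is correct, but it reaches the conclusion by a genuinely different route than the paper's. Both arguments rest on the same two pillars: (i) for a fixed feasible $S$, the sign of $v_0 z - \sum_{i = 1}^m V_i(S_i)^{\gamma_i}(R_i(S_i) - z)$ tells whether $z$ is above or below $Z(S)$ (your Fact 1, the paper's discussion after \eqref{EQN-EXPREV}); and (ii) the tightness identity $v_0 Z^* = \sum_{i = 1}^m \max_{S_i : \vert S_i \vert \leq C_i} V_i(S_i)^{\gamma_i}(R_i(S_i) - Z^*)$. The difference lies in how (ii) is obtained and how necessity is closed. The paper gets (ii) with no analytic machinery at all: the componentwise-maximizing assortment is itself feasible, so by maximality of $Z^*$ its aggregate is at most $v_0 Z^*$, while any optimal $S^*$ (which exists, the problem being finite) attains $v_0 Z^*$ and is dominated termwise by the maxima; this sandwich is exactly the paper's sufficiency step, and its necessity step is a coordinate-exchange argument (replacing a non-maximizing $S_j$ by a strictly better $S'_j$ yields a feasible assortment whose aggregate is still $\leq v_0 Z^*$, forcing the original aggregate strictly below it). You instead derive (ii) from the minimization formula \eqref{EQN-MAX} combined with continuity, strict monotonicity, and surjectivity of $\Phi$ — legitimate, since \eqref{EQN-MAX} is established in Section 2, and in fact this is the same monotone-root argument the paper deploys only later, in Lemma \ref{LEMMA-MINCONV} and Proposition \ref{PROP-SINGLETON}, to pin down $Z^*$; your proof thus front-loads machinery the paper deliberately postpones. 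Your necessity step (a sum of terms each bounded by its maximum equals the sum of maxima only with termwise equality) is a clean substitute for the paper's exchange argument. One concrete payoff of the paper's route is worth noting: the obstacle you flagged as the main difficulty — justifying tightness via continuity of $\Phi$ — evaporates under the feasibility argument, which needs neither continuity nor the LP chain; conversely, your route has the advantage of tying Proposition \ref{PROP-OPT-ASSORT} directly to the function $\Phi$ that the algorithm of Section \ref{SECT-ALG} actually manipulates.
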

\begin{proof}
Since $Z^*$ is the maximal expected revenue, by the definition of expected revenue, it must be the case that
\[Z(S) - Z^* = \frac{\sum_{i = 1}^m V_i(S_i)^{\gamma_i} (R_i(S_i) - Z^*) - v_0 Z^*}{v_0 + \sum_{i = 1}^m V_i(S_i)^{\gamma_i}} \leq 0, \]
which is equivalent to $\sum_{i = 1}^m V_i(S_i)^{\gamma_i} (R_i(S_i) - Z^*) - v_0 Z^* \leq 0$.
For any optimal assortment $S^* = (S^*_1, \ldots, S^*_m)$, $Z(S^*) - Z^* = 0$, which is equivalent to $\sum_{i = 1}^m V_i(S^*_i)^{\gamma_i} (R_i(S^*_i) - Z^*) - v_0 Z^* = 0$.
If $S_i$ maximizes $V_i(S_i)^{\gamma_i}(R_i(S_i) - Z^*)$ for all $i$,
\[0 \geq \sum_{i = 1}^m V_i(S_i)^{\gamma_i} (R_i(S_i) - Z^*) - v_0 Z^* \geq \sum_{i = 1}^m V_i(S^*_i)^{\gamma_i} (R_i(S^*_i) - Z^*) - v_0 Z^* = 0, \]
which implies that $S$ is also an optimal assortment.
Otherwise, if there exists some $j$ that $V_j(S_j)^{\gamma_j}(R_j(S_j) - Z^*) < V_j(S'_j)^{\gamma_j}(R_j(S'_j) - Z^*)$, then
\[\sum_{i = 1}^m V_i(S_i)^{\gamma_i} (R_i(S_i) - Z^*) - v_0 Z^* < V_i(S'_i)^{\gamma_i} (R_i(S'_i) - Z^*) + \sum_{i \neq j} V_i(S_i)^{\gamma_i} (R_i(S_i) - Z^*) - v_0 Z^* \leq 0, \]
which implies that $S$ is not optimal.
\end{proof}

Note that optimal assortment may not be unique. In some extreme cases, the optimal set can include exponential number of elements.
However, to find one optimal assortment is easy. In particular,
\eqref{EQN-REDUCESIZE} and Proposition \ref{PROP-OPT-ASSORT} show that the assortment $(S_1, \ldots, S_m)$ where $S_i \in \arg\max_{S_i \in \mathcal{T}_i}\{V_i(S_i)^{\gamma_i}(R_i(S_i) - Z^*)\}$ is optimal.
The difficulty is that the exact value of $Z^*$ is still unknown.
Next, we want to find $Z^*$ through the linear program formulation.

\begin{proposition}[Necessary condition]\label{PROP-OPTEQ}
All optimal solutions $(Z^*, \{y^*_1, \ldots, y^*_m\})$ for \eqref{EQN-EQVLP} satisfy
\begin{align*}
&v_0 Z^* = \sum_{i = 1}^m y^*_i\\
&y^*_i = V_i(S^*_i)^{\gamma_i} (R_i(S^*_i) - Z^*) \geq V_i(S_i)^{\gamma_i} (R_i(S_i) - Z^*), \forall \vert S_i \vert \leq C_i, \forall i
\end{align*}
for some $(S^*_1, \ldots, S^*_m)$ such that $\vert S_i \vert \leq C_i, \forall i$. 
\end{proposition}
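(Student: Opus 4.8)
The plan is to collapse the exponential family of per-nest constraints in \eqref{EQN-EQVLP} into a single one-dimensional feasibility condition on $z$, and then to extract the two claimed equalities from the minimality of $Z^*$ by a complementary-slackness-type chaining of inequalities. To set this up I would introduce, for each nest $i$ and each $z \in \mathbf{R}$, the quantity
\[g_i(z) := \max_{S_i:\, \vert S_i \vert \leq C_i} V_i(S_i)^{\gamma_i}(R_i(S_i) - z),\]
which is well defined as a maximum over the finite collection of feasible subsets and is attained at some $S_i(z)$. Feasibility of the given optimal solution $(Z^*, y^*_1, \ldots, y^*_m)$ then yields two families of inequalities for free: the constraint $y^*_i \geq V_i(S_i)^{\gamma_i}(R_i(S_i) - Z^*)$ for every feasible $S_i$ gives $y^*_i \geq g_i(Z^*)$ for each $i$, while the first constraint of \eqref{EQN-EQVLP} gives $v_0 Z^* \geq \sum_{i=1}^m y^*_i$.

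The crux is to prove $v_0 Z^* = \sum_{i=1}^m g_i(Z^*)$. Here I would exploit that, being a maximum of finitely many functions affine in $z$, each $g_i$ is continuous (indeed piecewise linear and convex), so $h(z) := v_0 z - \sum_{i=1}^m g_i(z)$ is continuous. Since one can always pick $y_i = g_i(z)$ to make $\sum_i y_i$ as small as the per-nest constraints permit, the program \eqref{EQN-EQVLP} is exactly $\min\{z : h(z) \geq 0\}$, which is precisely the reformulation \eqref{EQN-MAX}. Now suppose, for contradiction, that $h(Z^*) > 0$. By continuity there is a neighborhood of $Z^*$ on which $h > 0$, so one may choose $z' < Z^*$ with $h(z') \geq 0$; the point $(z', g_1(z'), \ldots, g_m(z'))$ is then feasible for \eqref{EQN-EQVLP} with objective strictly below $Z^*$, contradicting the minimality of $Z^*$. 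Hence $h(Z^*) = 0$, i.e.\ $v_0 Z^* = \sum_{i=1}^m g_i(Z^*)$.

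Finally I would chain everything: $v_0 Z^* \geq \sum_{i=1}^m y^*_i \geq \sum_{i=1}^m g_i(Z^*) = v_0 Z^*$. Equality must hold throughout, which gives $v_0 Z^* = \sum_{i=1}^m y^*_i$ (the first assertion), and forces $\sum_i y^*_i = \sum_i g_i(Z^*)$; combined with the termwise inequalities $y^*_i \geq g_i(Z^*)$ this yields $y^*_i = g_i(Z^*)$ for every $i$. Setting $S^*_i := S_i(Z^*)$ for a maximizer of $g_i(Z^*)$ then delivers $y^*_i = V_i(S^*_i)^{\gamma_i}(R_i(S^*_i) - Z^*) \geq V_i(S_i)^{\gamma_i}(R_i(S_i) - Z^*)$ for all feasible $S_i$, which is the second assertion.

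I expect the main obstacle to be the middle step: one must argue that the aggregate constraint $v_0 z \geq \sum_i g_i(z)$ is genuinely \emph{binding} at the optimum rather than slack. The clean route is the continuity-plus-minimality argument above, which sidesteps any appeal to differentiability or to LP duality; the only point needing care is confirming that each $g_i$ is continuous, and this follows immediately from its representation as a finite maximum of affine functions of $z$.
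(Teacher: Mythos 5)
Your proof is correct, but it reaches the conclusion by a different organization than the paper. The paper argues by contradiction directly on the LP solution in two cases: if every $y^*_i$ attains its per-nest maximum but $v_0 Z^* > \sum_i y^*_i$, it perturbs $Z^*$ to $Z^* - \varepsilon$; if instead some $y^*_i$ strictly exceeds its per-nest maximum, it first lowers that $y^*_i$ to the maximum and then falls back to the first case. You instead project the LP onto the $z$-variable, note that \eqref{EQN-EQVLP} equals $\min\{z : h(z) \geq 0\}$ with $h(z) = v_0 z - \sum_i g_i(z)$ continuous (this is the paper's own reformulation \eqref{EQN-MAX}), conclude $h(Z^*) = 0$ from minimality, and then extract both claimed equalities simultaneously from the chain $v_0 Z^* \geq \sum_i y^*_i \geq \sum_i g_i(Z^*) = v_0 Z^*$. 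The underlying mechanism (a leftward perturbation of $z$ justified by continuity of a finite maximum of affine functions) is the same, but your version is actually tighter on a point the paper glosses over: when the paper reduces $Z^*$ to $Z^* - \varepsilon$ it verifies only the constraints indexed by the old maximizers $S^*_i$ at the perturbed point, whereas feasibility requires $v_0(Z^* - \varepsilon) \geq \sum_i \max_{S_i:\vert S_i\vert \leq C_i} V_i(S_i)^{\gamma_i}(R_i(S_i) - Z^* + \varepsilon)$, since the per-nest maximizers may change; evaluating $g_i$ at the new point $z'$, as you do, closes exactly this gap. Note also that your $h$ is, up to sign, the function $G(z)$ that the paper only introduces after this proposition for Lemma \ref{LEMMA-MINCONV} and Proposition \ref{PROP-SINGLETON}, so your argument anticipates the paper's later machinery; what the paper's casewise version buys in exchange is that it stays entirely at the elementary level of individual LP constraints, while yours buys rigor at the perturbation step and a single unified chain in place of case analysis.
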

\begin{proof}
Prove by contradiction. Suppose there is an optimal solution $(Z^*, \{y^*_1, \ldots, y^*_m\})$ where for some $i$,
$y^*_i = V_i(S^*_i)^{\gamma_i} (R_i(S^*_i) - Z^*) \geq V_i(S_i)^{\gamma_i} (R_i(S_i) - Z^*), \forall S_i \in \mathcal{T}_i, $
but $v_0 Z^* > \sum_{i = 1}^m y^*_i$. Plug $y^*_i$ in, we have
$v_0 Z^* > \sum_{i = 1}^m V_i(S^*_i)^{\gamma_i} (R_i(S^*_i) - Z^*)$.
Recall that $v_0 > 0$ and all $v_{ij}$ are non-negative. There always exists $\varepsilon > 0$ such that
$v_0 (Z^* - \varepsilon) \geq \sum_{i = 1}^m V_i(S^*_i)^{\gamma_i} (R_i(S^*_i) - Z^* + \varepsilon)$
and hence it is possible to reduce $Z^*$ to $Z^* - \varepsilon$. The solution is not optimal.

Suppose there is an optimal solution $(Z^*, \{y^*_1, \ldots, y^*_m\})$ where for some $i$,
$y^*_i > V_i(S_i)^{\gamma_i} (R_i(S_i) - Z^*), \forall S_i \in \mathcal{T}_i$,
then reducing the corresponding $y'^*_i$ to $\max_{S_i \in \mathcal{T}_i} V_i(S_i)^{\gamma_i} (R_i(S_i) - Z^*)$ will also make
$(Z^*, \{y^*_1, \ldots, y^*_{i - 1}, y'^*_i, y^*_{i + 1}, y^*_m\})$ feasible and hence optimal.
For this case, $v_0 Z^* > \sum_{i = 1}^m y'^*_i$. In the case above, the solution is not optimal.
\end{proof}

Proposition \ref{PROP-OPTEQ} shows that the optimal objective for \eqref{EQN-EQVLP} must belong to the set
\[\mathcal{Z}^* = \left\{z:~ v_0 z = \sum_{i = 1}^m \max_{S_i:~ \vert S_i \vert \leq C_i} V_i(S_i)^{\gamma_i} (R_i(S_i) - z)\right\}.\]
If we construct $\{\mathcal{T}_i\}_{i = 1, \ldots, m}$ in the form of \eqref{EQN-CAND}, by \eqref{EQN-REDUCESIZE} the set is equivalent to
\[\mathcal{Z}^* = \left\{z:~ v_0 z = \sum_{i = 1}^m \max_{S_i \in \mathcal{T}_i} V_i(S_i)^{\gamma_i} (R_i(S_i) - z)\right\}.\]

For each nest $i$, we focus on the following function
\[g_i(z) = \max_{S_i \in \mathcal{T}_i} V_i(S_i)^{\gamma_i}(R_i(S_i) - z).\]
Obviously, the epigraph of $g_i(\cdot)$ is a intersection of $\vert \mathcal{T}_i \vert$ halfplanes,
so $g_i(\cdot)$ is continuous, decreasing (all $V_i(S_i) \geq 0$), convex, piecewise-linear and has at most $\vert \mathcal{T}_i \vert - 1$ breakpoints.
Summing up $\{g_i(\cdot)\}_{i = 1}^m$ and $-v_0 z$ yields
\[G(z) = -v_0 z + \sum_{i = 1}^m \max_{S_i \in \mathcal{T}_i} V_i(S_i)^{\gamma_i}(R_i(S_i) - z).\]
\begin{lemma}\label{LEMMA-MINCONV}
$G(z)$ is a strictly decreasing piecewise-linear convex funtion on $z$ with at most $\sum_{i = 1}^m (\vert \mathcal{T}_i \vert - 1) = O(mn^2)$ breakpoints.
\end{lemma}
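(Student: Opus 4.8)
The plan is to build $G$ directly from the individual-nest functions $g_i$, whose properties have already been established in the discussion preceding the lemma, and then transfer those properties through the two operations that produce $G$: addition of the affine term $-v_0 z$ and summation over $i$. Concretely, write $G(z) = -v_0 z + \sum_{i=1}^m g_i(z)$, where each $g_i(z) = \max_{S_i \in \mathcal{T}_i} V_i(S_i)^{\gamma_i}(R_i(S_i) - z)$ is, as noted above, continuous, convex, non-increasing, piecewise-linear, and carries at most $|\mathcal{T}_i| - 1$ breakpoints.

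First I would dispatch convexity and piecewise-linearity. The map $z \mapsto -v_0 z$ is affine, hence both convex and piecewise-linear with zero breakpoints; a finite sum of convex piecewise-linear functions is again convex and piecewise-linear, so $G$ inherits both properties (and continuity) immediately from the $g_i$'s.

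Next I would prove strict monotonicity, which is the one place where the affine term does real work. Each $g_i$ is only \emph{non-strictly} decreasing, since it can be flat on intervals where the maximizing $S_i$ is the empty set and $V_i(S_i) = 0$. However, the term $-v_0 z$ has slope $-v_0 < 0$ everywhere, so on every interval of linearity the slope of $G$ is at most $-v_0 < 0$. Equivalently, for $z_1 < z_2$ one has $-v_0 z_1 > -v_0 z_2$ strictly while $g_i(z_1) \geq g_i(z_2)$ for every $i$, and summing yields $G(z_1) > G(z_2)$. Hence $G$ is strictly decreasing.

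The only step requiring care is the breakpoint count. A point $z_0$ is a breakpoint of $G$ only if the slope of $G$ changes there, which in turn requires the slope of at least one summand to change at $z_0$. The affine term contributes no breakpoints, so the breakpoint set of $G$ is contained in the \emph{union} of the breakpoint sets of the $g_i$'s, a set of cardinality at most $\sum_{i=1}^m(|\mathcal{T}_i|-1)$. Using a union rather than naively summing matters: distinct nests may share a breakpoint location, which can only decrease the count, so the stated bound still holds. Substituting the earlier estimate $|\mathcal{T}_i| = O(n^2)$ then gives $\sum_{i=1}^m(|\mathcal{T}_i|-1) = O(mn^2)$. I expect this breakpoint accounting to be the main, though still modest, obstacle, since everything else follows from standard closure properties of convex piecewise-linear functions.
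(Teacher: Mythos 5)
Your proof is correct and follows essentially the same route as the paper's: decompose $G$ as the affine term $-v_0 z$ plus the $g_i$'s, inherit convexity and piecewise-linearity from the summands, obtain strict decrease from the strictly negative slope $-v_0$ combined with the non-increasing $g_i$'s, and bound the breakpoints by the union of the summands' breakpoints. If anything, your write-up is slightly more careful than the paper's on the two delicate points (the $g_i$'s may be flat where the maximizer has $V_i(S_i)=0$, and the breakpoint count is a union rather than a disjoint sum), but the underlying argument is identical.
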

\begin{proof}
Firstly, $g_0(z) := -v_0 z$ is a strictly decreasing linear function. For $i = 1, 2, \ldots, n$,
$g_i(z)$ is a continuous piecewise-linear convex function with at most $\vert \mathcal{T}_i \vert - 1$ breakpoints.
Between any two consecutive breakpoints, $g_i(z) = V_i(S_i)^{\gamma_i}R_i(S_i) - V_i(S_i)^{\gamma_i} z$ is decreasing on $z$ for all $i$.
In summary, $G(z)$ is strictly decreasing on $\mathbf{R}$.
Therefore, $G(z) = \sum_{i = 0}^m g_i(z)$ is a strictly decreasing piecewise-linear convex funtion on $z$ with at most $\sum_{i = 1}^m (\vert \mathcal{T}_i \vert - 1)$ breakpoints.
\end{proof}

\begin{proposition}[Necessary condition is sufficient]\label{PROP-SINGLETON}
$\mathcal{Z}^*$ is a singleton.
\end{proposition}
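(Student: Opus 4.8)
The plan is to reduce the claim to an elementary statement about the root of $G$. First I would observe that, directly from the definitions of $\mathcal{Z}^*$ and of $G(z) = -v_0 z + \sum_{i=1}^m \max_{S_i \in \mathcal{T}_i} V_i(S_i)^{\gamma_i}(R_i(S_i) - z)$, one has $\mathcal{Z}^* = \{z : G(z) = 0\}$. Thus proving that $\mathcal{Z}^*$ is a singleton is exactly the same as showing that the equation $G(z) = 0$ has precisely one solution.

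Uniqueness is immediate from Lemma \ref{LEMMA-MINCONV}: since $G$ is strictly decreasing and continuous (being piecewise-linear), it is injective, so it vanishes at most once. Hence $|\mathcal{Z}^*| \leq 1$, and it only remains to establish $\mathcal{Z}^* \neq \emptyset$.

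For existence I see two routes. The cleanest is to invoke Proposition \ref{PROP-OPTEQ}: the feasible region of \eqref{EQN-DC} is a finite collection of assortments, so the maximal expected revenue $Z^*$ is attained, and Proposition \ref{PROP-OPTEQ} places $Z^*$ inside $\mathcal{Z}^*$, giving non-emptiness at once. Alternatively, I would argue directly via the intermediate value theorem. As $z \to +\infty$, the empty assortment belongs to each $\mathcal{T}_i$ (it equals $\hat S_i(u)$ for $u$ large enough, when every $v_{ij}(r_{ij}-u) < 0$) and contributes the value $0$, while every nonempty $S_i$ contributes a quantity tending to $-\infty$; hence $g_i(z) = 0$ for $z$ large and $G(z) = -v_0 z \to -\infty$. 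As $z \to -\infty$, any nonempty $S_i \in \mathcal{T}_i$ contributes $V_i(S_i)^{\gamma_i}(R_i(S_i) - z) \to +\infty$, so $G(z) \to +\infty$. Continuity of $G$ and the IVT then furnish a root, and combining with uniqueness yields the singleton claim.

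The main obstacle — in fact the only delicate point — is the existence/sign-change argument: one must confirm that the constant-$0$ branch (the empty set) is genuinely present in each candidate set $\mathcal{T}_i$, and handle the degenerate evaluation $V_i(\emptyset)^{\gamma_i}(R_i(\emptyset) - z)$ under the convention that it equals $0$. Everything else is a direct consequence of the strict monotonicity established in Lemma \ref{LEMMA-MINCONV}.
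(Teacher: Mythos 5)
Your proof is correct and takes essentially the paper's own route: identify $\mathcal{Z}^*$ with the zero set of $G$, get uniqueness from the strict monotonicity in Lemma \ref{LEMMA-MINCONV}, and get existence from the sign change $G(-\infty) = +\infty$, $G(+\infty) = -\infty$ (the paper states these limits directly; your Proposition \ref{PROP-OPTEQ} route is an equally valid alternative for existence). One small remark: the ``delicate point'' you flag is unnecessary, since each $g_i$ is non-increasing, so $G(z) \leq G(0) - v_0 z$ for $z \geq 0$ and $G(z) \to -\infty$ follows from $v_0 > 0$ alone, whether or not the empty assortment belongs to $\mathcal{T}_i$.
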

\begin{proof}
$\mathcal{Z}^*$ is essentially $\{z:~ G(z) = 0\}$.
Since $G(z)$ is strictly decreasing while $G(-\infty) = \infty$ and $G(\infty) = -\infty$,
there must be a unique $z$ such that $G(z) = 0$, which implies that $\mathcal{Z}^*$ is a singleton.
\end{proof}

By Proposition \ref{PROP-SINGLETON}, $G(z) = 0$ uniquely defines $\mathcal{Z}^*$, so $Z^*$ should be the unique element in $\mathcal{Z}^*$.
The analysis above shows that $(S_1, \ldots, S_m)$ where $S_i \in \arg\max_{S_i \in \mathcal{T}_i}\{V_i(S_i)^{\gamma_i}(R_i(S_i) - Z^*)\}$ is an optimal assortment.

Before ending this section, we show that our optimality condition can be explained as the existence of a collection of $O(mn^2)$ candidate assortments that contains at least one optimal assortment. To this end,
denote
\begin{equation}\label{EQN-S}
S(z) = (S_1(z), \ldots, S_m(z))
\end{equation}
where $S_i(z) = \arg\max_{S_i \in \mathcal{T}_i} V_i(S_i)^{\gamma_i}(R_i(S_i) - z)$. 
If multiple maximizers, arbitrarily pick $S_i(z - \varepsilon)$ or $S_i(z + \varepsilon)$.
\begin{proposition}[Candidates for the whole problem]\label{PROP-GLOBAL-CAND}
There exists a set of assortments $\mathcal{T} = \{S(z), -\infty < z < \infty\}$
which contains at least one optimal assortment, and $\vert \mathcal{T} \vert = O(mn^2)$.
\end{proposition}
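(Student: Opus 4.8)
The plan is to prove the two assertions separately: first that $\mathcal{T}$ contains an optimal assortment, and then that it has only $O(mn^2)$ distinct elements. For the containment I would simply evaluate $S(\cdot)$ at $z = Z^*$. By Proposition \ref{PROP-SINGLETON} the value $Z^*$ is the unique element of $\mathcal{Z}^*$, and the definition \eqref{EQN-S} gives $S_i(Z^*) \in \arg\max_{S_i \in \mathcal{T}_i} V_i(S_i)^{\gamma_i}(R_i(S_i) - Z^*)$ for every $i$. Combining \eqref{EQN-REDUCESIZE} with Proposition \ref{PROP-OPT-ASSORT}, this is exactly the statement that $S(Z^*)$ is an optimal assortment. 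Since $Z^* \in (-\infty, \infty)$, we have $S(Z^*) \in \mathcal{T}$, which settles the first claim. The only point needing care is the tie-breaking in \eqref{EQN-S}: at a breakpoint the chosen representative is the limit from one side, and by continuity of $g_i$ this limit still lies in the argmax at $Z^*$, so the conclusion is unaffected.

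For the cardinality bound, the key observation is that although a naive count of the tuples $S(z)$ over all $z$ would give the product $\prod_{i=1}^m |\mathcal{T}_i|$, which is exponential, the scalar parameter $z$ lets us replace this product by a sum. Each $g_i$ is convex piecewise-linear, so its maximizer $S_i(z)$ is constant on every open interval between two consecutive breakpoints of $g_i$ and can only change when $z$ crosses one of these breakpoints. The breakpoints of $G = -v_0 z + \sum_{i=1}^m g_i$ are precisely the union of the breakpoints of the individual $g_i$, since the linear term $-v_0 z$ contributes none, and by Lemma \ref{LEMMA-MINCONV} their total number is at most $\sum_{i=1}^m(|\mathcal{T}_i| - 1) = O(mn^2)$. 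Hence on each maximal open interval on which $G$ is linear, every $g_i$ is simultaneously linear, so every coordinate $S_i(z)$, and therefore the whole tuple $S(z)$, is constant; if $G$ has $B$ breakpoints there are exactly $B+1 = O(mn^2)$ such intervals.

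Finally I would account for the breakpoints themselves: at each one the convention in \eqref{EQN-S} assigns $S(z)$ the value of $S(z-\varepsilon)$ or $S(z+\varepsilon)$, i.e. the assortment of an adjacent interval, so the breakpoints contribute no new elements. Consequently the number of distinct assortments in $\mathcal{T}$ is bounded by the number of maximal linear pieces of $G$, which is $O(mn^2)$. The main obstacle is exactly the temptation to count coordinatewise and arrive at the exponential product $\prod_{i=1}^m |\mathcal{T}_i|$; the resolution is to exploit that a single monotone parameter $z$ drives all coordinates at once, so the quantity that actually governs the count is the total number of breakpoints across nests rather than their product. A minor technical point to dispatch is that two distinct assortments could share the same linear piece, but this only forces them to have identical $V_i(\cdot)$ and $R_i(\cdot)$, so they are interchangeable and do not disturb the constancy of $S_i(z)$ on each interval.
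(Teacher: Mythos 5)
Your proposal is correct and follows essentially the same route as the paper's own proof: containment is established by evaluating $S(\cdot)$ at $Z^*$ and invoking \eqref{EQN-REDUCESIZE} together with Proposition \ref{PROP-OPT-ASSORT}, and the cardinality bound comes from counting the union of breakpoints of the $g_i$'s (at most $\sum_{i=1}^m(\vert\mathcal{T}_i\vert - 1) = O(mn^2)$) and noting that $S(\cdot)$ is constant between adjacent breakpoints. Your added care about tie-breaking at breakpoints and the explicit warning against the exponential coordinatewise count are refinements of detail, not a different argument.
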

\begin{proof}
Firstly, $S(Z^*) = (S_1(Z^*), \ldots, S_m(Z^*)) \in \mathcal{T}$.
For each nest $i$, from the fact 
\[S_i(Z^*) \in \arg\max_{S_i \in \mathcal{T}_i}\{V_i(S_i)^{\gamma_i}(R_i(S_i) - Z^*),\]
and equivalent relation \eqref{EQN-REDUCESIZE}, it follows that $S_i(Z^*) \in \arg\max_{S_i: \vert S_i \vert \leq C_i}\{V_i(S_i)^{\gamma_i}(R_i(S_i) - Z^*)$.
Then by Proposition \ref{PROP-OPT-ASSORT}, $S(Z^*)$ is an optimal assortment which belongs to $\mathcal{T}$.

Next, we analyze the size of $\mathcal{T}$.
For each nest $i$, $S_i(\cdot)$ is the maximizer of $g_i(\cdot)$ which is piecewise with at most $\vert \mathcal{T}_i \vert - 1$ breakpoints.
The union of $S_i(\cdot)$'s breakpoints will not exceed $\sum_{i = 1}^m (\vert \mathcal{T}_i \vert - 1) = O(mn^2)$.
It is obvious that between any adjacent breakpoints, $S(\cdot)$ is consistent.
Therefore, $\mathcal{T}$ will contain $O(mn^2)$ elements.
\end{proof}
We can see that this result is consistent with Lemma \ref{LEMMA-MINCONV}:
each linear piece of $G(\cdot)$ corresponds to an assortment and $G(z)$ is associated with $S(z)$ as the maximizer for all $z \in \mathbf{R}$.
In next section, we will design an algorithm to find an optimal assortment by enumerating all elements in $\mathcal{T}$ efficiently.

\section{The algorithm and analysis}\label{SECT-ALG}

Our algorithm is comprised of two stages.
The first is to generate $\{\mathcal{T}_i\}_{i = 1}^m$: the candidate sets for each nest as described in Section \ref{SUBSECT-INDNEST}.
Then we enumerate all the $O(mn^2)$ elements in $\mathcal{T}$, which is given in Proposition \ref{PROP-GLOBAL-CAND}, 
to find an optimal assortment.

\subsection{Generating the candidate assortments for an individual nest}\label{SECT-CANDIDATE}

Here we propose a simple algorithm to generate $\mathcal{T}_i$ for nest $i$, 
which is essentially determine the topmost $C_i$ lines in $\{f_j(u)\}_{j = 1}^n$ for all $u \in \mathbf{R}$, where $f_j(u) = v_{ij} (r_{ij} - u)$. 

Without loss of generality, we assume $v_{i1} \leq v_{i2} \leq \ldots \leq v_{in}$; 
if some $v_{ij} = v_{i(j + 1)}$, then order $v_{ij} r_{ij} \leq v_{i(j + 1)} r_{i(j + 1)}$. 
We scan from $u \rightarrow -\infty$ to $\infty$. 
Initially, for $u \rightarrow -\infty$, the order from highest to lowest must be $(n, n - 1, \ldots, 1, 0)$. 

As described in Section \ref{SUBSECT-INDNEST}, the relative order of lines will be consistent between two consecutive crosspoints. 
If there is a crosspoint $I_k$ only for two lines $j_1 > j_2$, i.e., $f_{j_1}(I_k) = f_{j_2}(I_k)$, 
their order must be consecutive and should be swapped before and after $I_k$,
i.e., $f_{j_1}(I_k - \varepsilon) > f_{j_2}(I_k - \varepsilon)$ and $f_{j_1}(I_k + \varepsilon) < f_{j_2}(I_k + \varepsilon)$.
When $r \geq 2$ lines $\{j_1, j_2, \ldots, j_r\}$ share the same crosspoint $I_k$,
i.e., $f_{j_1}(I_k) = f_{j_2}(I_k) = \ldots = f_{j_r}(I_k)$, their rank must be consecutive and should be reversed before and after $I_k$ as $u$ increases.
Without loss of genreality, assume $j_1 > \ldots > j_r$, which implies $f_{j_1}(I_k - \varepsilon) > \ldots > f_{j_r}(I_k - \varepsilon)$.
The common crosspoint can be regarded as $\frac{1}{2}r(r - 1)$ overlapped crosspoints. 
After a series of $\frac{1}{2}r(r - 1)$ swap operations:
\[(j_1, j_2), (j_1, j_3), \ldots, (j_1, j_r), (j_2, j_3), \ldots, (j_2, j_r), \ldots, (j_{r - 1}, j_r),\]
the order of $(j_1, \ldots, j_r)$ is reversed, which exactly matches $f_{j_r}(I_k + \varepsilon) > \ldots > f_{j_1}(I_k + \varepsilon)$.
Rule \ref{RULE-TIEBREAK} is a summarization. 

As an illustration, Figure \ref{FIG-SWAP} consists of 5 lines $\{1, 2, 3, 4, 5\}$ ranked from lowest absolute slope to highest. 
There are 4 crosspoints at $u = 0.5$ (3 ones are overlapped). For $u = 0.5 - \varepsilon$, the order is $(5, 3, 2, 4, 1)$; 
after a series of swaption: $(3, 5), (2, 5), (2, 3), (1, 4)$, the order becomes $(2, 3, 5, 1, 4)$, which is exactly the order at $u = 0.5 + \varepsilon$. 

\begin{figure}[h]
\caption{An example for Rule \ref{RULE-TIEBREAK}}
\label{FIG-SWAP}
\centering
\begin{tikzpicture}
    \begin{axis}[axis x line=middle,
                 axis y line=middle,
				 xmin=0, xmax=1, ymin=0.2, ymax=6.2]
        \addplot[thick,domain=0:1]{(x-0.5)*(-4)+4};
		\addplot[thick,domain=0:1]{(x-0.5)*(-3)+2};
		\addplot[thick,domain=0:1]{(x-0.5)*(-2)+4};
		\addplot[thick,domain=0:1]{(x-0.5)*(-0.5)+4};
		\addplot[thick,domain=0:1]{(x-0.5)*(-1)+2};
		\draw (axis cs:0,6.05)--(axis cs:0,6.05) node[right] {\footnotesize{$(5)$}};
		\draw (axis cs:0,3.55)--(axis cs:0,3.55) node[right] {\footnotesize{$(4)$}};
		\draw (axis cs:0,5.07)--(axis cs:0,5.07) node[right] {\footnotesize{$(3)$}};
		\draw (axis cs:0,4.37)--(axis cs:0,4.37) node[right] {\footnotesize{$(2)$}};
		\draw (axis cs:0,2.65)--(axis cs:0,2.65) node[right] {\footnotesize{$(1)$}};
		
		\draw (axis cs:1,1.95)--(axis cs:1,1.95) node[left] {\footnotesize{$(5)$}};
		\draw (axis cs:1,0.45)--(axis cs:1,0.45) node[left] {\footnotesize{$(4)$}};
		\draw (axis cs:1,2.93)--(axis cs:1,2.93) node[left] {\footnotesize{$(3)$}};
		\draw (axis cs:1,3.63)--(axis cs:1,3.63) node[left] {\footnotesize{$(2)$}};
		\draw (axis cs:1,1.35)--(axis cs:1,1.35) node[left] {\footnotesize{$(1)$}};
		
		\draw [style=dashed] (axis cs:0.5,4)--(axis cs:0.5,0.2);
    \end{axis}
\end{tikzpicture}
\end{figure}

\begin{myrule}[Tie-breaking]\label{RULE-TIEBREAK}
For any pair of crosspoints $(u_k, j'_k, j''_k)$ and $(u_{k + 1}, j'_{k + 1}, j''_{k + 1})$, $u_k \leq u_{k + 1}$; 
if $u_k = u_{k + 1}$, then order $j'_k \geq j'_{k + 1}$; if $u_k = u_{k + 1}$ and $j'_k = j'_{k + 1}$, then order $j''_k \geq j''_{k + 1}$.
\end{myrule}
That is to say, since there are at most $O(n^2)$ crosspoints, it is sufficient to perform at most $O(n^2)$ swaps as $u$ increases.
Once the $C_i$-th and $(C_i + 1)$-th elements swap, we update $\hat{S}_i(u)$ with the corresponding $V_i(\hat{S}_i(u))$ and $R_i(\hat{S}_i(u))$ by removing one and inserting another.
Each operation can be done in $O(1)$ time.

\begin{algorithm}[h]
\caption{Generating candidate set $\mathcal{T}_i$ for nest $i$}\label{ALG-GENCAN}
\begin{algorithmic}[1]
\Require $v_{ij}, r_{ij}, \forall j$; $C_i$
\Ensure $\mathcal{T}_i$
\State Reorder the products such that $v_{i1} \leq v_{i2} \leq \ldots \leq v_{in}$;
if $v_{ij} = v_{i(j + 1)}$, then order $v_{ij} r_{ij} \leq v_{i(j + 1)} r_{i(j + 1)}$. 
\State Define an order $R \leftarrow (n, n - 1, \ldots, 0)$. 
\State Denote $\hat{S}_i$ as the first $C_i$ elements of $R$ with $V_i(\hat{S}_i)$ and $R_i(\hat{S}_i)$, $\mathcal{T}_i \leftarrow \{\hat{S}_i\}$. 
\State Generate crosspoints $\mathcal{I} = \{(u_k, j'_k, j''_k): f_{j'}(u) = f_{j''}(u), j'_k < j''_k\}$ and sort them by Rule \ref{RULE-TIEBREAK}. 
\For{$k = 1, \ldots, p$}
	\State Swap the order of $(j'_k, j''_k)$ in $R$. 
	\If{the elements at $C_i$-th and $C_{i + 1}$-th order change after $u_k$}
		\State Denote $\hat{S}_i$ as the first $C_i$ elements of $R$ (if $\hat{S}_i$ contains 0, ignore 0 and drop all the elements below). 
		\State Update $V_i(\hat{S}_i)$ and $R_i(\hat{S}_i)$ by replacing the corresponding elements. 
		\State $\mathcal{T}_i \leftarrow \mathcal{T}_i \bigcup \{\hat{S}_i\}$. 
	\EndIf
\EndFor
\State \Return $\mathcal{T}_i$.
\end{algorithmic}
\end{algorithm}

Note that it is not necessary to record all the $\hat{S}_i(u)$ because $V_i(\hat{S}_i(u))$ and $R_i(\hat{S}_i(u))$ are sufficient. 
Generating and sorting $O(n^2)$ crosspoints costs $O(n^2 \log n)$, and generating all $V_i(\hat{S}_i(u))$ and $R_i(\hat{S}_i(u))$ by swapping at most $O(n^2)$ pairs of elements costs $O(n^2)$.
Theorem \ref{THM-GENCAN} is a summarization of the above analysis. 

\begin{theorem}\label{THM-GENCAN}
If we record $V_i(\hat{S}_i(u))$ and $R_i(\hat{S}_i(u))$ instead of each assortment $\hat{S}_i(u)$, for each nest $i$, Algorithm \ref{ALG-GENCAN} terminates in $O(n^2 \log n)$ time with candidate set $\mathcal{T}_i$.
\end{theorem}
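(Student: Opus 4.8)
The plan is to separate the two assertions of the theorem: correctness (the algorithm outputs, in the form of the pairs $(V_i(\hat S_i(u)), R_i(\hat S_i(u)))$, exactly the candidate set $\mathcal{T}_i$ of Section~\ref{SUBSECT-INDNEST}) and the $O(n^2\log n)$ running-time bound. For correctness I would maintain throughout the sweep the invariant that, once all crosspoints lying strictly to the left of the current position $u$ have been processed, the array $R$ lists the $n+1$ lines $\{f_0,f_1,\ldots,f_n\}$ in order of decreasing value at $u+\varepsilon$. The base case is immediate: since $f_j(u)=v_{ij}r_{ij}-v_{ij}u$ has slope $-v_{ij}$, the assumed ordering $v_{i1}\le\cdots\le v_{in}$ makes $(n,n-1,\ldots,1,0)$ the correct decreasing order as $u\to-\infty$, which matches the initialization in step~2.

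For the inductive step I would invoke the fact established in Section~\ref{SUBSECT-INDNEST} that the ranking is constant on each open interval strictly between consecutive crosspoints, so $R$ need only be corrected at crosspoints. A crosspoint of two lines $j_1>j_2$ forces those lines to equal value and hence to adjacent positions in $R$, so the single transposition of step~6 restores the invariant. The delicate case is a common crosspoint $I_k$ shared by $r\ge2$ lines $j_1>\cdots>j_r$, whose decreasing order must be completely reversed as $u$ passes $I_k$; here I would check that the prescribed sequence $(j_1,j_2),(j_1,j_3),\ldots,(j_{r-1},j_r)$ of $\tfrac12 r(r-1)$ transpositions turns $(j_1,\ldots,j_r)$ into $(j_r,\ldots,j_1)$---verified by following the position of each $j_\ell$ through the sequence---and that Rule~\ref{RULE-TIEBREAK} orders precisely these transpositions so that each one acts on an adjacent pair when it is executed. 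Granting the invariant, reading off the top $C_i$ entries of $R$ and discarding the no-purchase line $0$ together with everything below it (step~8) yields exactly $\hat S_i(u)$ on each interval; since consecutive intervals differ by a swap at the relevant boundary, the offered set changes by $O(1)$ products, so the incremental updates of $V_i$ and---through the numerator $\sum_j r_{ij}v_{ij}$ and the denominator $V_i$---of $R_i$ in steps~9--10 are correct and enumerate every distinct member of $\mathcal{T}_i$.

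I expect the reversal argument for a common crosspoint, together with confirming that Rule~\ref{RULE-TIEBREAK} never schedules a non-adjacent transposition, to be the only non-routine part; the remainder is bookkeeping. The complexity is then a direct count: the at most $\binom{n+1}{2}=O(n^2)$ crosspoints are generated in $O(n^2)$ time and sorted under Rule~\ref{RULE-TIEBREAK} in $O(n^2\log n^2)=O(n^2\log n)$ time; the loop executes $O(n^2)$ transpositions, each an $O(1)$ array swap; and each time the boundary changes, updating $V_i(\hat S_i)$ and $R_i(\hat S_i)$ costs $O(1)$ because it only removes one product's contribution and adds another. The sorting term dominates, giving the claimed $O(n^2\log n)$ bound and, as a by-product, $\vert\mathcal{T}_i\vert=O(n^2)$.
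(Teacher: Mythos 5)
Your proposal is correct and follows essentially the same route as the paper: a sweep over the $O(n^2)$ crosspoints sorted under Rule~\ref{RULE-TIEBREAK}, with the order invariant maintained by adjacent transpositions (including the reversal at common crosspoints), $O(1)$ incremental updates of $V_i(\hat S_i(u))$ and $R_i(\hat S_i(u))$, and the $O(n^2\log n)$ sorting step dominating the running time. Your write-up merely makes explicit (via the stated invariant and induction) what the paper's analysis in Sections~\ref{SUBSECT-INDNEST} and~\ref{SECT-CANDIDATE} summarizes informally.
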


\subsection{Enumerate the candidate assortments for the entire system}\label{SECT-PIECELINE}

Here we propose an strongly polynomial algorithm described in Algorithm \ref{ALG-DISJOINT} to solve the problem. 
Briefly, it is essentially enumerating the candidate set $\mathcal{T}$ in Proposition \ref{PROP-GLOBAL-CAND}.

\begin{algorithm}[h]
\caption{Combinatorial algorithm for disjoint-cardinality constraints}
\label{ALG-DISJOINT}
\begin{algorithmic}[1]
\Require $\left\{\mathcal{T}_i\right\}_{i = 1}^m$ generated by Algorithm \ref{ALG-GENCAN}.
\Ensure Optimal expected revenue $Z^*$ and optimal assortment $(S_1^*, S_2^*, \ldots, S_m^*)$.
\State \textit{Step 1: Iteratively calculating the expression of $g_i(\cdot)$}
\For{$i = 1, \ldots, m$}
	\State Let $\mathcal{L}^i = \{(a^i_j, b^i_j) := (V_i(S_i)^{\gamma_i} R_i(S_i), V_i(S_i)^{\gamma_i}), \forall S_i \in \mathcal{T}_i\}$. 
	\State Sort $\mathcal{L}^i$ to ensure $b^i_j > b^i_{j + 1}, \forall j$ while drop all $(a^i_{j'}, b^i_{j'})$ such that $\exists b^i_{j'} = b^i_{j''}$ and $a^i_{j'} < a^i_{j''}$. 
	\State $q^i \leftarrow 1$, $(\tilde{u}^i_1, \tilde{a}^i_1, \tilde{b}^i_1) \leftarrow (-\infty, a^i_1, b^i_1)$. 
	\For{$j = 2, \ldots, \vert \mathcal{L}^i \vert$}
		\While{$a^i_j - b^i_j \tilde{u}^i_{q^i} \geq \tilde{a}^i_{q^i} - \tilde{b}^i_{q^i} \tilde{u}^i_{q^i}$}
			\State $q^i \leftarrow q^i - 1$. 
		\EndWhile
		\State $q^i \leftarrow q^i + 1$, $(\tilde{u}^i_q, \tilde{a}^i_q, \tilde{b}^i_q) \leftarrow \left(\frac{\tilde{a}^i_q - a^i_j}{\tilde{b}^i_q - b^i_j}, a^i_j, b^i_j\right)$. 
	\EndFor
\EndFor
\State \textit{Step 2: Enumerating $\mathcal{T}$ to find the maximal expected revenue}
\State $A \leftarrow \sum_{i = 1}^m \tilde{a}^i_1, B \leftarrow v_0 + \sum_{i = 1}^m \tilde{b}^i_1$. 
\State $\Delta \leftarrow \bigcup_{i = 1}^m \left\{(u, \Delta a, \Delta b) := (\tilde{u}^i_j, \tilde{a}^i_j - \tilde{a}^i_{j - 1}, \tilde{b}^i_j - \tilde{b}^i_{j - 1}), \forall j = 2, \ldots, q^i\right\}$, sort $\Delta$ to ensure $u_i \leq u_{i + 1}, \forall i$. 
\State $Z^* \leftarrow A / B$. 
\For{$i = 1, \ldots, \vert \Delta \vert$}
	\State $A \leftarrow A + (\Delta a)_i, B \leftarrow B + (\Delta b)_i$. 
	\State $Z^* \leftarrow \max\{Z^*, A / B\}$. 
\EndFor
\State $S_i^* = \arg \max_{S_i \in \mathcal{T}_i} V_i(S_i)^{\gamma_i}(R_i(S_i) - Z^*)$. 
\State \Return Optimal expected revenue $Z^*$ and optimal assortment $(S_1^*, S_2^*, \ldots, S_m^*)$.
\end{algorithmic}
\end{algorithm}

Each loop in Step 1 calculates the expression of $g_i(\cdot)$ which is stored as 
$g_i(z) = \tilde{a}^i_j - \tilde{b}^i_j z, \tilde{u}^i_j \leq z \leq \tilde{u}^i_{j + 1}, j = 1, \ldots, q$ where $\tilde{u}^i_{q + 1} = \infty$. 

By definition, $g_i(z)$ is a maximization of a sequence of linear functions simplified as $g_i(z) := \max_{j = 1}^{\vert \mathcal{L} \vert} \{a^i_j - b^i_j z\}$. 
Without loss of generality, we assume $b^i_1 > \ldots > b^i_{\vert \mathcal{L} \vert}$ (ranking the lines from highest steepness to lowest) 
and no $(a^i_{j'}, b^i_{j'})$ such that $\exists b^i_{j'} = b^i_{j''}$ and $a^i_{j'} < a^i_{j''}$ (line $j'$ is redundant). 
With a slight abuse of notation, we define $g_i^k(z) = \max_{j = 1}^k \{a^i_j - b^i_j z\}$. 
For $k \geq 2$, the recursive formulation is $g_i^k(z) = \max\{g_i^{k - 1}(z), a^i_k - b^i_k z\}$. 
The iteration ends up with $g_i(z) = g_i^{\vert \mathcal{L} \vert}(z)$. 

For $k \geq 2$, we know that the (sub)gradient of $g_i^{k - 1}(z)$ for any $z \in \mathbf{R}$ is strictly less than $-b^i_k$. 
There must be a breakpoint $u$ such that $g_i^{k - 1}(u) = a^i_k - b^i_k u$, 
$z < u \Leftrightarrow g_i^{k - 1}(z) > a^i_k - b^i_k z$ and $z > u \Leftrightarrow g_i^{k - 1}(z) < a^i_k - b^i_k z$ (see Figure \ref{FIG-ADDLINE} for an illustration). 
Therefore $g_i^k(z)$ can be formulated by some leftmost linear piece of $g_i^{k - 1}(\cdot)$ plus a new linear piece 
\[g_i^k(z) = \left\{
\begin{array}{ll}
g_i^{k - 1}(z), & z \leq u\\
a^i_k - b^i_k z, & z \geq u
\end{array}
\right.\]
After repeatedly delete the rightmost linear piece until we find the intersection point $u$, 
we create a new linear piece for $z \geq u$ and the function is then updated to $g_i^k(\cdot)$. 

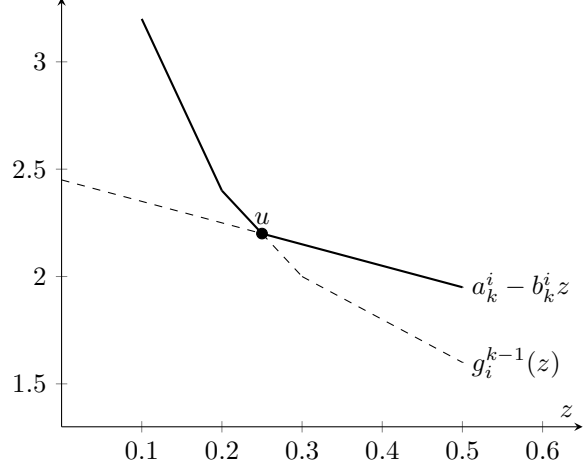
\begin{figure}[h]
\caption{The maximization of $g_i^{k - 1}(z)$ and $a^i_k - b^i_k z$}
\label{FIG-ADDLINE}
\centering
\begin{tikzpicture}
    \begin{axis}[axis x line=middle,
                 axis y line=middle,
                 xlabel=$z$,
				 xmin=0, xmax=0.65,
				 ymin=1.3, ymax=3.3]
        \addplot[thick,domain=0.1:0.2]{4-8*x};
		\addplot[thick,domain=0.2:0.25]{3.2-4*x};
		\addplot[dashed,domain=0.25:0.3]{3.2-4*x};
		\addplot[dashed,domain=0.3:0.5]{2.6-2*x};
		\addplot[dashed,domain=0.0:0.25]{2.45-1*x};
		\addplot[thick,domain=0.25:0.5]{2.45-1*x};
		\addplot[mark=*] coordinates {(0.25,2.2)};
		\draw (axis cs:0.25,2.2)--(axis cs:0.25,2.2) node[right,above] {$u$};
		\draw (axis cs:0.5,1.95)--(axis cs:0.5,1.95) node[right] {$a^i_k - b^i_k z$};
		\draw (axis cs:0.5,1.6)--(axis cs:0.5,1.6) node[right] {$g_i^{k - 1}(z)$};
    \end{axis}
\end{tikzpicture}
\end{figure}

In Step 2, we enumerate all assortments in $\mathcal{T}$ to find the maximal expected revenue. 
Denote $A(z) = \sum_{i = 1}^m V_i(S_i(z))^{\gamma_i} R_i(S_i(z))$ and $B(z) = v_0 + \sum_{i = 1}^m V_i(S_i(z))^{\gamma_i}$. 
By definition, the expected revenue for assortment $S_i(z)$ is $A(z) / B(z)$. 
Also, $G(z) = v_0 z + \sum_{i = 1}^m g_i(z) := A(z) - B(z) z$.
To calculate the maximal expected revenue, the algorithm only needs attributes $A(z)$ and $B(z)$ for each $S(z)$. 

The set of $G(\cdot)$'s breakpoints consists of all breakpoints of $g_i(\cdot)$ for all $i = 1, \ldots, m$;
also $A(\cdot)$ and $B(\cdot)$. The iterative expression for $A(\cdot)$ is 
\begin{align*}
A\left(-\infty\right) &= A\left(-\infty\right) + \sum_{i = 1}^m \tilde{a}^i_1, \\
A\left(z_+\right) &= A\left(z_-\right) + \sum_{(i, j): \tilde{u}^i_j = z} (\tilde{a}^i_j - \tilde{a}^i_{j - 1}), 
\end{align*}
and $B(\cdot)$ is similar. 
We gather and sort all breakpoints to $u_1 \leq \ldots \leq u_{\vert \Delta \vert}$. 
For each $u_i$, we update $(A, B)$ to $(A(u_i), B(u_i))$ from previous one. 
The algorithm finds the largest expected revenue $Z^* = \max_{z \in \mathbf{R}}\{A(z) / B(z)\}$ by essentially enumerating all elements in $\mathcal{T}$. 
By the way, it is simple to find an optimal assortment given $Z^*$. 

Next we discuss the time complexity except for generating $\{\mathcal{T}_i\}_{i = 1}^m$.

The first part: For each nest $i$, sort $\vert \mathcal{L}^i \vert = O(n^2)$ elements costs $O(n^2 \log n)$; 
each iteration will create a new piece and each linear piece can be delete at most once. 
In total, the first part costs $O(mn^2 \log n)$.

The second part: the sorting, which is essentially merging $m$ $O(n^2)$-sized ordered lists, can be done in $O(mn^2 \log m)$; 
the enumeration costs $O(mn^2)$. 

Theorem \ref{THM-DISJOINT} is a summary of the above analysis.

\begin{theorem}\label{THM-DISJOINT}
Algorithm \ref{ALG-DISJOINT} terminates with an optimal assortment in $O(mn^2 \log m)$ time. 
\end{theorem}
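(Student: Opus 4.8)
The plan is to split Theorem \ref{THM-DISJOINT} into a correctness part --- that Algorithm \ref{ALG-DISJOINT} returns an optimal assortment --- and a running-time part, and to treat them in the order the algorithm executes. For correctness I would first argue that Step 1 computes, for every nest $i$, the function $g_i(\cdot)$ exactly as the upper envelope $\max_j\{a^i_j - b^i_j z\}$ of the redundancy-pruned lines. I would prove this by induction on the inner-loop index $k$, with the hypothesis that the stack $(\tilde u^i_j,\tilde a^i_j,\tilde b^i_j)_{j=1}^{q^i}$ encodes $g_i^k(\cdot)$ on the consecutive intervals $[\tilde u^i_j,\tilde u^i_{j+1})$. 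The inductive step rests on the single-intersection property drawn in Figure \ref{FIG-ADDLINE}: since the lines are sorted by strictly decreasing slope, the new line $a^i_k - b^i_k z$ is flatter than every stored piece, hence dominates for large $z$ and crosses $g_i^{k-1}(\cdot)$ at exactly one point $u$; popping the stored pieces lying to the right of $u$ and appending the new piece yields $g_i^k(\cdot)$. Taking $k=|\mathcal L^i|$ gives $g_i(\cdot)$.

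Next I would show Step 2 reports $Z^*$. By Lemma \ref{LEMMA-MINCONV} the breakpoints of $G(\cdot)$ are exactly the union of the breakpoints of the $g_i(\cdot)$, which is the multiset $\Delta$ assembled by the algorithm; between two consecutive such breakpoints every maximizer $S_i(\cdot)$, and hence the assortment $S(z)$ of \eqref{EQN-S}, is constant. Consequently $A(z)$ and $B(z)$ are \emph{constant} on each piece and equal $\sum_i V_i(S_i(z))^{\gamma_i}R_i(S_i(z))$ and $v_0+\sum_i V_i(S_i(z))^{\gamma_i}$, so $A/B$ is just the expected revenue $\Pi(S(z))$ of the assortment on that piece. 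The incremental updates of $(A,B)$ along the sorted $\Delta$ therefore visit the $(A,B)$ pair of every piece, i.e. every element of $\mathcal T$, and the running maximum of $A/B$ equals $\max_{S\in\mathcal T}\Pi(S)$. By Proposition \ref{PROP-GLOBAL-CAND} this set contains an optimal assortment and every member is feasible, so the maximum is exactly $Z^*$; the closing argmax line then recovers an optimal assortment via \eqref{EQN-REDUCESIZE} and Proposition \ref{PROP-OPT-ASSORT}.

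The point I expect to be the main obstacle is a subtlety I would flag explicitly: when several nests change at a common breakpoint $z$, the loop applies the changes one at a time and takes a maximum after each. An intermediate $(A,B)$ need not correspond to any $S(z)\in\mathcal T$, yet it is still the expected revenue of a genuinely feasible assortment (one component set evaluated at $z^+$, the rest at $z^-$), so it is bounded above by $Z^*$ and can never overshoot the optimum, while every true per-piece value is still captured once all simultaneous updates at a given $u$ have been applied. Establishing this --- together with the accompanying claim that maintaining only the scalars $(A,B)$, rather than the possibly exponentially many set-tuples, loses no information needed to pin down $Z^*$ --- is the delicate part of the correctness argument.

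Finally, for the running time I would account for the two stages as in the surrounding text. In Step 1, sorting each $\mathcal L^i$ of size $O(n^2)$ costs $O(n^2\log n)$, while the envelope construction is amortized linear: a standard potential argument shows each line is appended once and popped at most once, so the loop does $O(n^2)$ work, giving $O(mn^2\log n)$ over all nests. In Step 2, assembling $\Delta$ is a merge of $m$ sorted lists of size $O(n^2)$, costing $O(mn^2\log m)$, and the single sweep with constant-time $(A,B)$ updates costs $O(mn^2)$. Summing the two stages yields $O(mn^2\log n + mn^2\log m)=O(mn^2\log mn)$, the bound claimed (up to the combination of the two logarithmic factors).
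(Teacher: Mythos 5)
Your proposal is correct and takes essentially the same route as the paper: correctness via the upper-envelope construction of each $g_i(\cdot)$ plus enumeration of the pieces of $G(\cdot)$ (i.e., of $\mathcal{T}$ from Proposition \ref{PROP-GLOBAL-CAND}), and the same two-part cost accounting (per-nest sorting $O(n^2\log n)$ with amortized-linear envelope building, then an $O(mn^2\log m)$ merge and an $O(mn^2)$ sweep); your added details (the induction for the envelope and the treatment of simultaneous breakpoints) only make rigorous what the paper states informally. The $\log m$ versus $\log mn$ discrepancy you flag lies in the paper itself, not in your argument: the paper's own analysis of Step 1 gives $O(mn^2\log n)$, and its stated total workflow cost is $O(mn^2\log mn)$, matching your bound.
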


The workflow to solve the optimization problem is: (1) Generate $\{\mathcal{T}_i\}_{i = 1}^m$ by running Algorithm \ref{ALG-GENCAN} $m$ times, which costs $O(mn^2 \log n)$ time; 
(2) Run Algorithm \ref{ALG-DISJOINT} to determine the maximal expected revenue and find an optimal assortment. 
The total running time is $O(mn^2 \log mn)$. 

\section{Computational Experiments}\label{SECT-COMPUT}

In this section, we conduct some computational experiments to show the efficiency of our algorithm.
In particular, we compare Algorithm \ref{ALG-DISJOINT} with directly solving linear program formulation \eqref{EQN-REDUCEDLP}.
In our tests, it is solved by IBM ILOG CPLEX 12.6 (64-bit version) via the ILOG Concert API.
The base frequency of the CPU for the tests is 2.10GHz.

We randomly generate 10 problem instances for each setting.
The number of nest $m$ and the products in each nest $n$ is specified in each setting.
For all product $ij$, the weight $v_{ij}$ is i.i.d. random variable uniformly from [0.1, 10.0] and the revenue $r_{ij}$ is i.i.d. uniformly from [0, 10].
While the weight of no-purchase $v_0 = 1$.
The dissimilarity parameters $\gamma_i = 0.5$ for all $i$  and the constraints $C_i = n / 2$ for $i$.

The results are shown in Table \ref{TABLE-COMPARE}. Note that the experiment is memory-consuming.
Our Algorithm \ref{ALG-DISJOINT} is optimized to reduce the memory consumption and therefore performs well for all the instances.
In particular, Table \ref{TABLE-COMPARE} suggests that Algorithm \ref{ALG-DISJOINT} is able to solve a instance with $200000$ nests and each nest includes $200$ products within a minute. 
By contrast, CPLEX comsumes more time and more memory when the problem size increases.
In summary, the results suggest that our algorithm is an efficient approach when the problem size is large.

\begin{table}[h]
\caption{Comparison of the running time in seconds for Algorithm \ref{ALG-DISJOINT} and CPLEX}\label{TABLE-COMPARE}
\centering
\begin{threeparttable}
\begin{tabular}{|r|rr|rr|rr|rr|rr|}
\hline
\multirow{2}{*}{\backslashbox{$m$}{$n$}} & \multicolumn{2}{c|}{10} & \multicolumn{2}{c|}{20} & \multicolumn{2}{c|}{50} & \multicolumn{2}{c|}{100} & \multicolumn{2}{c|}{200}\\
\cline{2-11}
& Alg. \ref{ALG-DISJOINT} & CPLEX & Alg. \ref{ALG-DISJOINT} & CPLEX & Alg. \ref{ALG-DISJOINT} & CPLEX & Alg. \ref{ALG-DISJOINT} & CPLEX & Alg. \ref{ALG-DISJOINT} & CPLEX\\
\hline
100&	0.00&	0.01&	0.00&	0.02&	0.01&	0.06&	0.00&	0.06&	0.01&	0.11\\
200&	0.00&	0.03&	0.00&	0.06&	0.01&	0.07&	0.01&	0.12&	0.02&	0.25\\
500&	0.00&	0.17&	0.00&	0.13&	0.02&	0.25&	0.02&	0.46&	0.05&	1.04\\
1000&	0.00&	0.23&	0.01&	0.36&	0.03&	0.57&	0.05&	1.01&	0.10&	2.46\\
2000&	0.01&	0.40&	0.02&	0.55&	0.05&	1.05&	0.11&	2.02&	0.23&	4.22\\
5000&	0.03&	0.78&	0.05&	1.24&	0.14&	2.62&	0.30&	5.16&	0.62&	10.90\\
10000&	0.05&	1.94&	0.11&	2.84&	0.30&	5.72&	0.64&	13.00&	1.35&	23.59\\
20000&	0.12&	5.65&	0.24&	7.68&	0.64&	18.57&	1.37&	29.03&	2.90&	52.02\\
50000&	0.32&	27.34&	0.64&	46.55&	1.75&	61.71&	3.75&	88.38&	8.13&	150.42\\
100000&	0.67&	122.72&	1.38&	136.70&	3.89&	166.49&	8.40&	217.51&	18.26&	344.78\\
200000&	1.46&	420.28&	3.07&	442.55&	8.59&	516.79&	18.86&	643.24&	40.56&	1040.59\\
\hline
\end{tabular}
\end{threeparttable}
\end{table}

\section*{Acknowledgement}

We are grateful to Jiawei Zhang for introducing us this problem.
We thank Bo Jiang and Yi Xu for the discussion and useful suggestions.


\bibliographystyle{model1-num-names}
\small

\end{document}